\renewcommand*{\bibpagespunct}{\addcolon\space}  
\newtheorem{theorem}{Theorem}
\newtheorem{lemma}[theorem]{Lemma}
\theoremstyle{definition}
\renewcommand{\phi}{\varphi}
\renewcommand{\epsilon}{\varepsilon}
\newcommand{\dd}{\,\mathrm{d}}
\begin{document}

\title{On a solution to the Basel problem based on the fundamental
  theorem of calculus}

\author{Alessio Del Vigna}


\maketitle


\section{Introduction} \label{sec:intro}

Let $s$ be a real number. Everyone knows that the infinite series
\[
    \sum_{n=1}^\infty \frac{1}{n^s}
\]
is convergent if $s>1$ and that it diverges if $s\leq 1$. The sum of
this series is denoted by $\zeta(s)$ and it is known as the
\emph{Riemann zeta function}\footnote{The Riemann zeta function is
  actually defined for $s$ being complex. In this setting it can be
  proven that the series $\sum_{n=1}^\infty {1}/{n^s}$ converges if
  and only if Re$(s)>1$ and that $\zeta(s)$ can be extended to a
  meromorphic function on the whole complex plane, which is
  holomorphic everywhere but a simple pole at $s=1$.}. In 1743 Euler
computed the value of $\zeta(2)$ by proving the identity
\begin{equation}\label{zeta}
    \sum_{n=1}^\infty \frac{1}{n^2} = \frac{\pi^2}6.
\end{equation}
The problem of evaluating the sum of the reciprocals of the squares,
also known under the name of \emph{Basel problem}, was first posed by
Mengoli in the mid-seventeenth century and attacked by many prominent
mathematicians of the time without success, until Euler.

Over the years, several solutions to the Basel problem have been found
using a vast variety of techniques. The proof in \cite{matsuoka}
relies on a recurrence equation obtained by cleverly evaluating
certain trigonometric integrals and then by telescoping. Some other
proofs are based on evaluations of double integrals: the proof by
Apostol \cite{apostol} and that by Beukers, Kolk, and Calabi
\cite{BKC} use simple double integral and ingenious substitutions, for
which they deserved a place in the wonderful text ``Proofs from {\sc
  the book}'' \cite{proofs-book}. The more recent proof in
\cite{ritelli} uses the double integral of a rational function with
the lowest degree among the functions used in other similar
proofs. Moreover, many textbooks in Fourier analysis contain proofs or
guided exercises about the Basel problem, based on the evaluation of
the Fourier series of certain functions or on the Parseval
identity. In some complex analysis textbooks the proof is instead
given via contour integrals and their evaluation through the residue
theorem. Lastly, \cite{pace} contains an original proof based on
elementary probability tools. This list is not intended to be
exhaustive, but it is just a way to show that original proofs can come
from different area of mathematics.

Here we give another way of solving the Basel problem from the area of
the mathematical analysis. The main ingredients for our proof are the
differentiation under the integral sign, a trick which is recurrent in
series evaluation, and the fundamental theorem of calculus, which to
our knowledge has never been used in this context.


\section{The proof} \label{sec:proof}

We consider the function
$f:(0,\pi/2)\times [0,1] \rightarrow \mathbb{R}$ defined to be
\[
    f(x,t) = \arccos \bigg(\frac{t-\tan^2 x}{t+\tan^2 x}\bigg).
\]
Since the integral of $f(x,t)$ with respect to $x$ exists for every
$t\in [0,1]$, we are allowed to define $g:[0,1]\rightarrow \mathbb{R}$,
function of just the variable $t$, to be
\[
    g(t) = \int_0^{\frac{\pi}2} \arccos\bigg(\frac{t-\tan^2
        x}{t+\tan^2 x}\bigg) \dd x.
\]
Here it comes the differentiation under the integral sign because we
would like to compute the derivative of the function $g$.

\begin{lemma}\label{lemma:g_diff}
    The function $g$ is differentiable on $(0,1)$ and it holds
    \[
        g'(t) = \frac{\log t}{2\sqrt{t}(1-t)}.
    \]
\end{lemma}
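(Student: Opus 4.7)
The plan is to apply Leibniz's rule for differentiation under the integral sign. Writing $u(x,t) = (t-\tan^2 x)/(t+\tan^2 x)$ and using $\frac{d}{du}\arccos u = -1/\sqrt{1-u^2}$, I would first compute
\[
    1 - u^2 = \frac{4t\tan^2 x}{(t+\tan^2 x)^2}, \qquad \frac{\partial u}{\partial t} = \frac{2\tan^2 x}{(t+\tan^2 x)^2},
\]
and combine these (using $\tan x > 0$ on $(0,\pi/2)$ so no sign issue arises from the square root) to obtain
\[
    \frac{\partial f}{\partial t}(x,t) = -\frac{\tan x}{\sqrt{t}\,(t+\tan^2 x)}.
\]

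The main obstacle is the rigorous justification of exchanging derivative and integral: the factor $1/\sqrt{t}$ rules out any dominating function on the whole strip $(0,\pi/2)\times(0,1)$, and the factor $\tan x$ could cause worries as $x\to \pi/2$. I would handle both by localising: fix an arbitrary compact $[a,b]\subset(0,1)$ and bound
\[
    \left|\frac{\partial f}{\partial t}(x,t)\right| \leq \frac{1}{\sqrt{a}}\cdot\frac{\tan x}{a+\tan^2 x} \quad \text{for } t\in[a,b].
\]
The right-hand side is continuous on $(0,\pi/2)$ with limits $0$ at both endpoints, hence bounded and integrable in $x$. Leibniz's rule then gives differentiability of $g$ on $(a,b)$, and since $[a,b]$ is arbitrary, on all of $(0,1)$.

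It remains to evaluate $g'(t) = -\frac{1}{\sqrt{t}}\int_0^{\pi/2}\frac{\tan x}{t+\tan^2 x}\dd x$. I would use the substitution $v=\tan^2 x$, which yields $\dd x = \dd v/(2\sqrt{v}(1+v))$ and transforms the integral into
\[
    \frac{1}{2}\int_0^\infty \frac{\dd v}{(t+v)(1+v)}.
\]
Partial fractions (with denominator $1-t$, which is nonzero on $(0,1)$) give the antiderivative $\frac{1}{1-t}\log\frac{t+v}{1+v}$, whose value at $\infty$ is $0$ and at $0$ is $\frac{\log t}{1-t}$. Putting everything back together produces the claimed formula $g'(t)=\log t / (2\sqrt{t}(1-t))$. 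No step beyond the Leibniz justification looks delicate; the rest is a chain-rule calculation followed by a standard rational integral.
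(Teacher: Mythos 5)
Your proposal is correct and follows essentially the same route as the paper: compute $\partial f/\partial t(x,t)=-\tan x/\bigl(\sqrt{t}\,(t+\tan^2 x)\bigr)$, justify differentiation under the integral sign by localising $t$ away from $0$ to obtain an integrable dominating function, and then evaluate the resulting integral in closed form. The only differences are cosmetic: the paper dominates by the constant $1/(2\delta)$ via the AM--GM inequality rather than by your $x$-dependent bound, and it evaluates the integral by an antiderivative directly in $x$ instead of the substitution $v=\tan^2 x$ followed by partial fractions; both variants are sound.
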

\begin{proof}
    The function $g$ is defined through an integral and the conclusion
    is just a computation provided that we are allowed to
    differentiate under the integral sign. To do this it suffices to
    show that the conditions stated in Theorem~\ref{thm:diff_int}
    hold. For all $x\in (0,\pi/2)$ and for all
    $t\in (0,1)$ we have the existence of the partial derivative
    \[
        \frac{\partial f}{\partial t}(x,t) = -\frac{\tan
          x}{\sqrt{t}(t+\tan^2 x)}.
    \]
    For the domination condition we can use the inequality between the
    arithmetic and the geometric mean of non-negative numbers:
    \[
        \bigg|\frac{\partial f}{\partial t}(x,t)\bigg| =
        \frac{1}{t} \cdot \frac{\sqrt{t}\tan x}{t+\tan^2x}\leq
        \frac{1}{2t}.
    \]
    We then fix $\delta$ with $0<\delta<1$ and we restrict $t$ to the
    interval $(\delta,1)$, so that the previous bound yields
    \[
        \bigg|\frac{\partial f}{\partial t}(x,t)\bigg|\leq \frac{1}{2
          \delta},
    \]
    with the bounding function being integrable over $(0,\pi/2)$. Thus
    $g$ is differentiable on the interval $( \delta,1)$ for every
    $ \delta$, and since $0< \delta<1$ we have the differentiability
    of $g$ over the whole $(0,1)$. We are thus allowed to
    differentiate $g$ by passing the derivative under the integral
    sign. This yields
    \begin{align*}
        g'(t) &= -\frac{1}{\sqrt{t}}\int_0^{\frac{\pi}2} \frac{\tan
          x}{t+\tan^2x} \dd x =
        -\frac{1}{\sqrt{t}}\int_0^{\frac{\pi}2} \frac{\sin x\cos
          x}{(t-1)\cos^2x+1} \dd x=\\
        &= \frac{1}{2\sqrt{t}(t-1)}\log((t-1)\cos 2x +
        t+1)\big|_{x=0}^{x=\frac{\pi}2}=
        \frac{\log t}{2\sqrt{t}(1-t)}.
    \end{align*}
    and Lemma~\ref{lemma:g_diff} is proved.
\end{proof}

We are now in a position to prove \eqref{zeta} by applying the
fundamental theorem of calculus to the function $g'$ over the interval
$(0,1)$. As in many solutions to the Basel problem, we do not directly
show \eqref{zeta} but the equivalent
\begin{equation}\label{equiv}
    \sum_{n=0}^\infty \frac{1}{(2n+1)^2} = \frac{\pi^2}{8}.
\end{equation}
Indeed we immediately have
$\sum_{n=0}^\infty \frac{1}{(2n+1)^2} =\frac 34 \sum_{n=1}^\infty
\frac{1}{n^2}$.

\begin{theorem}
    It holds that
    \[
        \sum_{n=1}^\infty\frac{1}{n^2} = \frac{\pi^2}{6}.
    \]
\end{theorem}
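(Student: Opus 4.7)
The plan is to apply the fundamental theorem of calculus to $g$ on the interval $[0,1]$, compute the boundary values $g(0)$ and $g(1)$ directly from the defining integral, and then identify the resulting integral of $g'$ with the target series via a geometric-series expansion.

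First I would compute the two boundary values. At $t=0$ the integrand of $g$ reduces to $\arccos(-1)=\pi$, giving $g(0)=\pi^2/2$. At $t=1$, using $\cos 2x = (1-\tan^2 x)/(1+\tan^2 x)$, the integrand equals $\arccos(\cos 2x)=2x$ for $x\in(0,\pi/2)$, so $g(1)=\pi^2/4$. Hence $g(1)-g(0)=-\pi^2/4$.

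Next I would check that $g$ is continuous on $[0,1]$ and that $g'$, which by Lemma~\ref{lemma:g_diff} equals $\log t/(2\sqrt{t}(1-t))$, is Lebesgue-integrable on $(0,1)$: near $t=0$ it behaves like $\log t/\sqrt{t}$, and near $t=1$ the factor $\log t$ cancels the pole $1/(1-t)$. Applying the fundamental theorem of calculus then yields
\[
-\frac{\pi^2}{4} \;=\; g(1)-g(0) \;=\; \int_0^1 \frac{\log t}{2\sqrt{t}(1-t)}\dd t.
\]
A substitution $u=\sqrt{t}$ transforms this into $\int_0^1 \frac{2\log u}{1-u^2}\dd u$, so that $\int_0^1 \frac{\log u}{1-u^2}\dd u=-\pi^2/8$.

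Finally I would expand $1/(1-u^2)=\sum_{n\geq 0} u^{2n}$ and swap the sum and integral. Since all the terms $u^{2n}\log u$ have the same sign on $(0,1)$, monotone convergence applies. A routine integration by parts gives $\int_0^1 u^{2n}\log u\dd u = -1/(2n+1)^2$, so the swap produces $-\sum_{n=0}^\infty 1/(2n+1)^2 = -\pi^2/8$, which is exactly \eqref{equiv} and hence \eqref{zeta}. The main obstacle I anticipate is the justification of the FTC step: one has to check that $g$ extends continuously to the closed interval $[0,1]$ (so that the evaluations of $g(0)$ and $g(1)$ are actually the correct boundary values) and that the improper integral of $g'$ over $(0,1)$ indeed recovers $g(1)-g(0)$; once this is in place, everything else is a straightforward change of variables and monotone swap.
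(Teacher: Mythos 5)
Your proposal is correct and follows essentially the same route as the paper: apply the fundamental theorem of calculus to $g$ on $[0,1]$ after checking continuity of $g$ up to the endpoints and integrability of $g'$, evaluate $g(0)=\pi^2/2$ and $g(1)=\pi^2/4$, substitute $u=\sqrt{t}$, expand $1/(1-u^2)$ as a geometric series, and integrate termwise to recover $\sum_{n\ge 0}1/(2n+1)^2=\pi^2/8$. The only (welcome) extra detail is your explicit appeal to monotone convergence for the sum--integral interchange, which the paper leaves implicit.
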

\begin{proof}
    In Lemma~\ref{lemma:g_diff} we proved that $g$ is differentiable
    on $(0,1)$, which implies that $g$ is continuous on $(0,1)$. We
    claim that $g$ is also continuous in $t=0$. Let $(t_n)_{n=0}^\infty$
    be a sequence in $(0,1)$ such that $t_n\rightarrow 0$. Since for
    all $x\in (0,\pi/2)$ and for all $t\in [0,1]$ holds
    $|f(x,t)|\leq \pi$ and since $(0,\pi/2)\times [0,1]$ has
      finite measure we can apply the dominated convergence theorem to
      obtain
    \[
        \lim_{n\rightarrow +\infty} g(t_n) = \lim_{n\rightarrow
          +\infty}\int_0^{\frac{\pi}2} f(x,t_n) \dd x =
        \int_0^{\frac{\pi}2} \lim_{n\rightarrow+\infty} f(x,t_n) \dd x
        = g(0).
    \]
    An analogous argument holds for $t=1$, so that $g$ turns out to be
    continuous on the whole $[0,1]$. We now claim that $g'$ is
    integrable on $(0,1)$: $g'$ is continuous on $(0,1)$, it can be
    extended by continuity in $t=1$, and moreover
    \[
        \frac{\log t}{\sqrt{t}(1-t)}\sim \frac{\log
          t}{\sqrt{t}}\quad\text{ for } t\rightarrow 0^+.
    \]
    Thus we can apply the fundamental theorem of calculus to $g'$ to
    write
    \begin{equation}\label{fund}
        g(1)-g(0) = \int_0^1 g'(t) \dd t.
    \end{equation}
    We now evaluate the quantities of the above identity. We have
    $g(0) = \int_0^{{\pi}/2} \pi \dd x = \pi^2/2$ and
    \[
        g(1) = \int_0^{\frac{\pi}2} \arccos \bigg(\frac{1-\tan^2
          x}{1+\tan^2 x}\bigg) \dd x = \int_0^{\frac{\pi}2} 2x \dd x =
        \frac{\pi^2}4,
    \]
    where we used the trigonometric identity
    $(1-\tan^2 x)/(1+\tan^2 x)=\cos 2x$. For the right-hand side
    of~\eqref{fund} we use the expression of $g'$ and series expansion
    to evaluate the resulting integral:
    \begin{align*}
        \int_0^1 g'(t) \dd t &=
        \frac 12 \int_0^1 \frac{\log t}{\sqrt{t}(1-t)} \dd t =
        2\int_0^1 \frac{\log u}{1-u^2} \dd u=\\
        &=2\int_0^1 \log u \Bigg(\sum_{n=0}^\infty u^{2n}\Bigg) \dd u =
        2\sum_{n=0}^\infty \bigg(\int_0^1 u^{2n}\log u \dd u\bigg) =\\
        &=-2 \sum_{n=0}^\infty \frac{1}{(2n+1)^2}.
    \end{align*}
    By equating the quantities involved in \eqref{fund}, we
    immediately obtain \eqref{equiv}.
\end{proof}

\appendix
\section{Differentiation under the integral sign}

Differentiation under the integral sign concerns integrals depending
on a parameter and it is often a powerful tool to evaluate definite
integrals and series. Over analysis and measure theory textbooks one
can find plenty of versions, with slight differences in the hypotheses
or in the conclusions. To avoid misunderstandings, we decided to
explicitly state the version we used: see, for instance,
\cite[Theorem~6.2.6]{hijab}.

\begin{theorem}\label{thm:diff_int}
    Let $X\subseteq \mathbb{R}^n$ be a measurable set and $A\subseteq \mathbb{R}$ be
    an open set. Let $f:X\times A\rightarrow \mathbb{R}$ be a measurable
    function such that
    \begin{enumerate}
      \item the function $\mathbf{x}\mapsto f(\mathbf{x},t)$ is
        integrable on $X$ for all $t\in A$;
      \item the partial derivative
        ${\partial f}/{\partial t}(\mathbf{x},t)$ exists for
        a.e. $\mathbf{x}\in X$ and for all $t\in A$;
      \item there exists an integrable function $h:X\rightarrow \mathbb{R}$
        such that
        $|{\partial f}/{\partial t}(\mathbf{x},t)|\leq
        h(\mathbf{x})$ for a.e. $\mathbf{x}\in X$ and for all
        $t\in A$.
    \end{enumerate}
    Then for all $t\in A$ holds
    \[
        \frac{ d}{ d t} \bigg(\int_X f(\mathbf{x},t)
          \dd \mathbf{x}\bigg) = \int_X \frac{\partial f}{\partial
          t}(\mathbf{x},t) \dd \mathbf{x}.
    \]
\end{theorem}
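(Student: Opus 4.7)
The plan is to reduce the conclusion to a single application of the dominated convergence theorem, combined with the mean value theorem to justify a dominating bound on the difference quotients. I will verify the derivative of $F(t) := \int_X f(\mathbf{x},t) \dd\mathbf{x}$ pointwise at an arbitrary $t_0 \in A$ by computing the limit along sequences.

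First I would fix $t_0 \in A$ and, since $A$ is open, choose $\epsilon>0$ so that $[t_0-\epsilon,t_0+\epsilon]\subset A$. For any sequence $(t_n)\subset A\setminus\{t_0\}$ with $t_n\to t_0$ and $|t_n-t_0|\le\epsilon$, introduce the difference quotients
\[
    \phi_n(\mathbf{x}) = \frac{f(\mathbf{x},t_n)-f(\mathbf{x},t_0)}{t_n-t_0}.
\]
These are measurable by the joint measurability of $f$, and hypothesis (i) guarantees that $\phi_n$ is integrable for each $n$. By the sequential characterization of the derivative, the equality to be proven reduces to showing that $\int_X \phi_n\dd\mathbf{x}\to \int_X \partial f/\partial t(\mathbf{x},t_0)\dd\mathbf{x}$ for every such sequence.

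Next I would verify the two ingredients of the dominated convergence theorem applied to $(\phi_n)$. For pointwise convergence, hypothesis (ii) at $t=t_0$ says that for almost every $\mathbf{x}$ the map $t\mapsto f(\mathbf{x},t)$ is differentiable at $t_0$, so $\phi_n(\mathbf{x})\to \partial f/\partial t(\mathbf{x},t_0)$ a.e. For the dominating bound, apply the one-dimensional mean value theorem to $t\mapsto f(\mathbf{x},t)$ on the closed interval joining $t_0$ and $t_n$ to obtain some $\xi_n=\xi_n(\mathbf{x})$ in that interval with $\phi_n(\mathbf{x}) = \partial f/\partial t(\mathbf{x},\xi_n)$; hypothesis (iii) then yields $|\phi_n(\mathbf{x})|\le h(\mathbf{x})$ for a.e. $\mathbf{x}$. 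Since $h$ is integrable, the dominated convergence theorem gives
\[
    \lim_{n\to\infty} \int_X \phi_n(\mathbf{x})\dd\mathbf{x} = \int_X \frac{\partial f}{\partial t}(\mathbf{x},t_0)\dd\mathbf{x},
\]
and as the sequence $(t_n)$ was arbitrary this is exactly the claimed identity at $t_0$.

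The main technical obstacle is the mean value theorem step, and this is where one should be careful about how the ``a.e.'' in hypothesis (ii) is parsed. As stated, (ii) asserts that for each fixed $t\in A$ the partial derivative exists off a null set that may depend on $t$, whereas the mean value argument needs differentiability of $t\mapsto f(\mathbf{x},t)$ on an entire interval for a single full-measure set of $\mathbf{x}$. The intended reading (matching the reference \cite{hijab}) is that (ii) together with (iii) provides a single null set $N\subset X$ such that for every $\mathbf{x}\in X\setminus N$ the partial derivative $\partial f/\partial t(\mathbf{x},t)$ exists for all $t\in A$; under this interpretation the MVT applies pointwise to every $\mathbf{x}\in X\setminus N$ and the proof closes as above. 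Aside from this parsing subtlety, every step is a routine invocation of the sequential characterization of the derivative and the dominated convergence theorem.
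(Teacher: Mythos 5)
Your proof is correct and follows essentially the same route as the paper's: difference quotients along an arbitrary sequence $t_n\to t_0$, the mean value theorem to convert hypothesis (iii) into a dominating bound $|\phi_n(\mathbf{x})|\le h(\mathbf{x})$, and the dominated convergence theorem to pass the limit inside the integral. Your remark that the null set in hypothesis (ii) must be taken uniform in $t$ for the mean value theorem step to make sense is a genuine subtlety that the paper's own proof passes over silently, so your version is if anything slightly more careful.
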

\begin{proof}
    Let $t\in A$ and let $r>0$ such that
    $B_r(t)= (t-r,t+r)\subseteq A$. Let $(t_n)_{n=0}^\infty$ be a
    sequence in $B_r(t)$ converging to $t$ and consider the ratio
    \[
        \frac{\int_X f(\mathbf{x},t_n) \dd \mathbf{x} -\int_X
          f(\mathbf{x},t) \dd \mathbf{x}}{t_n-t}=
        \int_X \frac{f(\mathbf{x},t_n)-f(\mathbf{x},t)}{t_n-t}
        \dd \mathbf{x}.
    \]
    Condition (ii) implies that the last integrand function converges
    to ${\partial f}/{\partial t}(\mathbf{x},t)$ for
    a.e. $\mathbf{x}\in X$. From the mean value theorem there exists
    $\xi\in B_r(t)$ such that for a.e. $\mathbf{x}\in X$ holds
    \[
        \frac{f(\mathbf{x},t_n)-f(\mathbf{x},t)}{t_n-t}=
        \frac{\partial f}{\partial t}(\mathbf{x},\xi),
    \]
    and hence condition (iii) implies that for a.e. $\mathbf{x}\in X$
    \[
        \bigg|\frac{f(\mathbf{x},t_n)-f(\mathbf{x},t)}{t_n-t}\bigg| =
        \bigg|\frac{\partial f}{\partial t}(\mathbf{x},\xi)\bigg|\leq
        h(\mathbf{x}).
    \]
    We can thus apply the Lebesgue dominated convergence theorem to
    obtain
    \begin{align*}
        \frac{ d}{ d t} \bigg(\int_X f(\mathbf{x},t)
          \dd \mathbf{x}\bigg) &=
        \lim_{n\rightarrow +\infty}\int_X
        \frac{f(\mathbf{x},t_n)-f(\mathbf{x},t)}{t_n-t} \dd \mathbf{x}=\\
        &=\int_X \bigg(\lim_{n\rightarrow +\infty}
          \frac{f(\mathbf{x},t_n)-f(\mathbf{x},t)}{t_n-t}\bigg)
        \dd \mathbf{x}=
        \int_X \frac{\partial f}{\partial
          t}(\mathbf{x},t) \dd \mathbf{x},
    \end{align*}
    so that Theorem~\ref{thm:diff_int} is proved. \end{proof}

\printbibliography

\end{document}